\newtheorem{Theorem}{Theorem}[section]
\newtheorem{Lemma}[Theorem]{Lemma}
\newtheorem{Proposition}[Theorem]{Proposition}
\newtheorem{Corollary}[Theorem]{Corollary}
\newtheorem{Example}[Theorem]{Example}
\newtheorem{Remark}[Theorem]{Remark}
\newenvironment{Proof*}{{\it Proof.}}
\newcommand{\RR}{\mathbb{R}}
\newcommand{\TT}{\mathbb{T}}
\newcommand{\supp}{{\rm Supp}}
\newcommand{\CP}[1]{{\rm CP}_{#1}\left(\TT\right)}
\newcommand{\CPrk}[1]{{\rm {CPrk}}\left( #1 \right)}
\newcommand{\cc}[1]{{\rm {cc}}\left( #1 \right)}
\newcommand{\diam}[1]{{\rm diam}(#1)}
    \definecolor{david}{rgb}{.2,.8,.4}
    \definecolor{polona}{rgb}{.8,.2,.2}
   \definecolor{todo}{rgb}{.2,.2,.8}
\begin{document}

\title{Bounds for the completely positive rank of a symmetric matrix over a tropical semiring}
\author{David Dol\v zan, Polona Oblak}
\date{\today}

\address{D.~Dol\v zan:~Department of Mathematics, Faculty of Mathematics
and Physics, University of Ljubljana, Jadranska 21, SI-1000 Ljubljana, Slovenia; e-mail: 
david.dolzan@fmf.uni-lj.si}
\address{P.~Oblak: Faculty of Computer and Information Science, University of Ljubljana,
Ve\v cna pot 113, SI-1000 Ljubljana, Slovenia; e-mail: polona.oblak@fri.uni-lj.si}

 \subjclass[2010]{15A23, 15B48, 16Y60}
 \keywords{Tropical semiring, symmetric matrix, rank}
   \thanks{The authors acknowledge the financial support from the Slovenian Research Agency  (research core funding No. P1-0222)}

\bigskip

\begin{abstract} 
In this paper, we find an upper bound for the CP-rank of a matrix over a tropical semiring, according to the vertex clique cover of the graph prescribed by the positions of zero entries in the matrix. We study the graphs  that beget the  matrices with the lowest possible CP-ranks and prove that any such graph must have its diameter equal to 2.
\end{abstract}

\maketitle 

\section{Introduction}

\maketitle 

In this paper, we study the completely positive rank of a matrix over the tropical semiring $\TT$, which is the semiring $(\RR \cup \{\infty\}, \oplus, \odot)$, with operations defined by $a \oplus b = \min\{a,b\}$ and $a \odot b = a+b$.
 
For a semiring $S$, we say that a symmetric  $n \times n$ matrix $A$ over $S$ is \emph{completely positive}, if there exists an $n \times r$ matrix $B$ over $S$ such that
$$A=BB^T.$$ 
The minimal possible $r$ in such factorization, is the \emph{CP-rank} of $A$ and it is denoted by $\CPrk{A}$. Equivalently, a matrix $A$ has $\CPrk{A}=r$ if and only if $r$ is the smallest number, such that there  exist vectors
$b_1,b_2,\ldots,b_r \in \TT^n$ with
$$A=\sum_{i=1}^r b_i b_i^T.$$
If matrix $A$ is not completely positive, we denote $\CPrk{A}=\infty$. Note that in \cite{MR2927632}, the authors refer to CP-rank as the symmetric Barvinok 
rank of a matrix.

Note that over  semirings,  all definitions of the rank of a  matrix do not coincide  
as in the case of matrices over real numbers  with standard operations (see e.g.~\cite{MR2121497,MR3351964}). 
Thus, the CP-rank  (which is a special case of a factor rank) is just one of 
many possible semiring matrix ranks. 

For a completely positive $n \times n$ matrix $A$ over the field $\RR$, Drew, Johnson, Loewy \cite{MR1310974} conjectured that  $\CPrk{A}\leq \left \lfloor\frac{n^2}{4}\right \rfloor$ if $n\geq 4$. Twenty years later, the
conjectured upper bound was proved wrong and corrected to $\frac{n^2}{2}$ for all $n \geq 7$ 
\cite{MR3247223,MR3302593}. However, it is still not known what is the tight upper bound and  it transpires that the problem of determining the CP-rank of any given matrix is a difficult problem \cite{MR3414584,MR1986666}.

Let $M_n(S)$ denote the semiring of all $n \times n$ matrices over the semiring $S$.
Over the tropical semiring $\TT$, Cartwright and Chan \cite{MR2927632} proved that 
$\max\left\{n, \left\lfloor \frac{n^2}{4} \right \rfloor\right\}$ 
is the tight upper bound for the CP-rank of a completely positive matrix  $A \in M_n(\TT)$. 
Over the Boolean semiring and the max-min semiring, the same inequality was proved by Mohindru \cite{MR3343670} and Shitov \cite{MR3434516}.

In \cite{MR3386390},  Shaked-Monderer introduced $\CPrk{G}$ to be the maximum
CP-rank of all real matrices with the pattern prescribed by the graph $G$. She proved that the $\CPrk{G}$ is equal to the to the edge clique cover number of
$G$, if and only if $G$ is not a tree and does not contain a triangle.

We follow \cite{MR2927632} to define  $\CPrk{G}$ over the tropical semiring
to be the maximum of CP-ranks of all  completely positive 
matrices $A=(a_{ij}) \in M_n(\TT)$ such that, for $i \neq j$, $a_{ij} = 0$ if and only if 
$\{i,j\} \in E(G)$. 
 (Note that throughout the paper, zero is a real number and not the tropical additive identity, which is $\infty$.) Observe that in $G$, edges correspond to all entries equal to a specific element $0$ distinct from the additive identity in $\TT$. This graph is a subgraph of the weighted graph 
corresponding to a semiring matrix (see for example \cite{MR2389137}), which is also  called  the precedence graph.

In this paper, we find an upper bound for the CP-rank of a matrix with regards to the vertex clique cover of the graph prescribed by the positions of zero entries in the matrix. This bound  can be much lower than the bound $\max\left\{n, \left\lfloor \frac{n^2}{4} \right \rfloor\right\}$ from \cite[Theorem 4]{MR2927632}, see Theorem \ref{theta} and Remark \ref{manycases}. We then proceed to apply these results to 0/1 matrices, since it was established in \cite{MR2927632} that  CP-rank of 0/1 matrices is equal to the edge clique cover number of the corresponding graph. We examine the connection between the ranks of 0/1 matrices and  arbitrary matrices with the same 
positions of zero entries. In the last section, we then study the graphs  that beget the matrices with the lowest possible CP-ranks. We prove that any such graph must have its diameter equal to 2, and provide examples that in case of diameter 2 the rank does not seem to be well behaved.

\bigskip

\section{Preliminary results}

In this section, we give the basic definitions and some preliminary results.

%
%


\bigskip

First, we provide the characterization of completely positive matrices over the tropical semiring. The subset of $M_n(\TT)$ of all completely positive matrices will be denoted by 
$\CP{n}$.

The following lemma is obvious and characterizes matrices of CP-rank equal to 1.

\begin{Lemma}\label{rk1mtx}
A symmetric matrix $A=(a_{ij})\in M_{n}(\TT)$ has $\CPrk{A}=1$ if and only if $a_{i j_1}\odot a_{kj_2} = a_{kj_1} \odot a_{ij_2}$
for all $i,j_1,j_2,k =1,2,\ldots,n$.
(This means that the difference between any two rows of $A$ with finite entries is a vector with all of its entries equal.)
\end{Lemma}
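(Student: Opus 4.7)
The plan is to prove both implications by direct computation, with $b_i=a_{ii}/2$ playing the role of the ``coordinates'' of the would-be rank-one factor.

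The forward implication is immediate: if $\CPrk{A}=1$, then $A=bb^T$ for some $b=(b_1,\ldots,b_n)^T\in\TT^n$, so $a_{ij}=b_i\odot b_j = b_i+b_j$. Substituting,
$$a_{ij_1}\odot a_{kj_2} = (b_i+b_{j_1})+(b_k+b_{j_2}) = (b_k+b_{j_1})+(b_i+b_{j_2}) = a_{kj_1}\odot a_{ij_2},$$
using only commutativity and associativity of $+$ in $\RR\cup\{\infty\}$.

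For the reverse implication, the key observation is that specializing the hypothesis to $j_1=i$, $j_2=j$, $k=j$ and invoking symmetry of $A$ yields $a_{ii}+a_{jj}=a_{ij}+a_{ji}=2a_{ij}$, i.e.\ $a_{ij}=(a_{ii}+a_{jj})/2$ whenever $a_{ii}$ and $a_{jj}$ are both finite. This suggests defining $b_i:=a_{ii}/2$ (with the convention $\infty/2=\infty$); then $a_{ij}=b_i\odot b_j$ on the set $I=\{i:a_{ii}<\infty\}$, producing a rank-one factorization $A=bb^T$ on that block.

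The only step requiring extra care is the behaviour of rows indexed by $i\notin I$, and I expect this to be the one delicate point. The claim is that $a_{ii}=\infty$ forces the entire $i$-th row (and column) to be $\infty$, so that $b_i=\infty$ correctly reproduces those entries via $b_i\odot b_j=\infty$. This again follows from the same specialization: $2a_{ij}=a_{ii}+a_{jj}=\infty$ forces $a_{ij}=\infty$. The parenthetical remark in the statement is then just a reformulation of the hypothesis as $a_{ij_1}-a_{kj_1}=a_{ij_2}-a_{kj_2}$, which says precisely that the difference of any two rows of $A$, restricted to columns where both entries are finite, is a constant vector; this is the familiar classical-algebra signature of a rank-one symmetric matrix, transplanted to $\TT$.
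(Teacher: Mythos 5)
The paper labels this lemma as obvious and supplies no proof, so there is nothing to compare against; your argument is correct and is the natural filling-in of the omitted details. Both directions check out: the forward one is pure commutativity of $\odot$, and the reverse one's key specialization $j_1=i$, $k=j$, $j_2=j$, giving $2a_{ij}=a_{ii}+a_{jj}$ and hence the factorization $b_i=a_{ii}/2$ (with the $\infty$-diagonal rows forced to be entirely $\infty$, as you note), is exactly the intended content of the parenthetical remark.
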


The following lemma characterizes completely positive matrices over the tropical semiring.

\begin{Lemma}{\cite[Proposition 2 and Theorem 4]{MR2927632}} \label{CP}
A symmetric matrix $A=(a_{ij}) \in M_{n}(\TT)$ is completely positive if and only if $2a_{ij} \geq a_{ii}+a_{jj}$ for all $i,j=1,2,\ldots,n$.
\end{Lemma}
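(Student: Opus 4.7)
The plan is to prove both implications of the biconditional directly from the definition of complete positivity over $\TT$.

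For the necessity direction, suppose $A = BB^T$ for some $n \times r$ matrix $B = (b_{ik})$ over $\TT$. Then $a_{ij} = \min_{k} (b_{ik} + b_{jk})$. Specialising to $i=j$ gives $a_{ii} = \min_k 2 b_{ik}$, which forces $b_{ik} \geq a_{ii}/2$ for every column index $k$. Therefore
$$2a_{ij} = 2\min_{k}(b_{ik}+b_{jk}) \geq 2 \cdot \tfrac{a_{ii}+a_{jj}}{2} = a_{ii}+a_{jj},$$
which is the claimed inequality.

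For the sufficiency direction, I would construct an explicit factorization $A = BB^T$. A natural first attempt is to use $n$ columns $b_i$ with $b_i(j) = a_{ij} - a_{ii}/2$; however, this fails in general because the hypothesis $2a_{ij} \geq a_{ii}+a_{jj}$ does not force the inequality $a_{ij}+a_{ik}-a_{ii} \geq a_{jk}$ needed to make this work (small $3\times 3$ examples already show this). Instead, I would use a larger factorization with sparse columns. For each ordered pair $(i,j)$ with $i\neq j$, introduce a column $b_{(i,j)}\in\TT^n$ with
$$b_{(i,j)}(i) = \tfrac{a_{ii}}{2}, \qquad b_{(i,j)}(j) = a_{ij}-\tfrac{a_{ii}}{2}, \qquad b_{(i,j)}(\ell) = \infty \text{ for } \ell \notin \{i,j\}.$$
The hypothesis immediately gives $b_{(i,j)}(j) \geq a_{jj}/2$. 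In addition, for each $i$ include a column $b_{(i,i)}$ with $b_{(i,i)}(i)=a_{ii}/2$ and $\infty$ elsewhere.

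Finally, I would verify $BB^T = A$ entry by entry. On diagonals, every column $b_{(k,\ell)}$ with $k=i$ or $\ell=i$ contributes $2b(i) \geq a_{ii}$, with equality attained by $b_{(i,i)}$ (and actually also by any $b_{(i,j)}$), while every other column contributes $\infty$; hence $(BB^T)_{ii}=a_{ii}$. On an off-diagonal entry $(i,j)$ with $i\neq j$, the only columns with both the $i$-th and $j$-th coordinates finite are $b_{(i,j)}$ and $b_{(j,i)}$, each contributing exactly $a_{ij}$, so $(BB^T)_{ij}=a_{ij}$. The main obstacle is precisely the observation that the ``one column per index'' recipe does not work, and the key insight is to decouple the off-diagonal entries by using columns supported on only two coordinates.
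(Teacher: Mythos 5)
Your proof is correct, but note that the paper does not prove this lemma at all: it is quoted verbatim from Cartwright and Chan \cite[Proposition 2 and Theorem 4]{MR2927632}, so you have supplied a self-contained argument where the authors only cite one. Your necessity direction is the standard one (each $b_{ik}\geq a_{ii}/2$ forces $b_{ik}+b_{jk}\geq\tfrac{1}{2}(a_{ii}+a_{jj})$), and your sufficiency construction with columns supported on two coordinates does verify $BB^T=A$ entrywise; your observation that the naive ``one column per index'' recipe fails (since $2a_{ij}\geq a_{ii}+a_{jj}$ does not imply $a_{ij}+a_{ik}-a_{ii}\geq a_{jk}$) is accurate and is exactly the reason a sparser decomposition is needed. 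The trade-off is that your factorization uses on the order of $n^2$ columns, whereas the cited Theorem 4 of \cite{MR2927632} proves the same sufficiency with the optimal $\max\left\{n,\left\lfloor n^2/4\right\rfloor\right\}$ columns via a more careful grouping of the off-diagonal pairs; since the lemma only asserts complete positivity and not a rank bound, your weaker construction is enough here. One small point to tidy up: when $a_{ii}=\infty$ the expression $a_{ij}-\tfrac{a_{ii}}{2}$ is of the form $\infty-\infty$ and is undefined; you should dispose of this case first by noting that the hypothesis forces the entire $i$-th row and column to be $\infty$, so one may simply take all columns to have $\infty$ in coordinate $i$ (equivalently, delete that row and column and factor the smaller matrix).
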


This lemma implies that if $a_{ii}=\infty$ for $A=(a_{ij}) \in \CP{n}$ and some $i$, then $a_{ij}=\infty$ for all $j$. Also, if 
all the diagonal elements of a completely positive matrix $A$ are equal to 0, then all off-diagonal
entries are nonnegative. This fact makes it convenient to study such matrices, and also gives sense to studying matrices  defined by the positions of the zero entries.
 The next paragraph describes the procedure to transform the completely positive matrix into a matrix with diagonal entries equal to 0, while preserving the CP-rank.

Choose $A=(a_{ij}) \in M_n(\TT)$.
Let $A[i]\in M_{n-1}(\TT)$ be the matrix obtained from $A$ by deleting its $i$-th row and $i$-th column and let $b[i]\in \TT^{n-1}$ be the vector obtained from vector $b \in \TT^n$ by deleting its $i$-th entry.
If matrix $A$ has $k$ diagonal entries equal to $\infty$, let $C(A) \in M_{n-k}(\TT)$ be the matrix obtained from $A$ by 
 \begin{itemize}\label{CAproc}
  \item deleting $i$-th row and $i$-th column if $a_{ii}=\infty$ for every $i=1,2,\ldots,n$, and
  \item subtracting $\frac{1}{2} a_{ii}$ from each entry in the  $i$-th row and $i$-th column of $A$, if  $a_{ii} \ne \infty$ for every $i=1,2,\ldots,n$. (Note that subtracting a real number from $\infty$ yields $\infty$ 
  and that we subtract $\frac{1}{2} a_{ii}$  twice from $a_{ii}$.)
 \end{itemize}

The next lemma assures us that the rank of a matrix does not change with the above transformation.

\begin{Lemma}\label{C(A)}
If $A\in M_n(\TT)$ is completely positive, then 
$$\CPrk{A}=\CPrk{C(A)}.$$
\end{Lemma}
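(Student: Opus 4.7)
The plan is to verify that each of the two operations defining $C(A)$ preserves the CP-rank. Together these operations realize $C(A)$ as the restriction of a tropical diagonal congruence $A\mapsto DAD$ to its finite-diagonal block; since such congruences obviously preserve CP-rank via the identification $BB^T\mapsto (DB)(DB)^T$, this is the heart of the argument.

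For an index $i$ with $a_{ii}=\infty$, the remark following Lemma \ref{CP} already gives that the whole $i$-th row and $i$-th column of $A$ consist of $\infty$ entries. In any factorization $A=\sum_{k=1}^r b_k b_k^T$, the $(i,i)$-entry reads $\min_k 2\,b_k(i)=\infty$, which forces $b_k(i)=\infty$ for every $k$. Stripping the $i$-th coordinate from each $b_k$ produces a factorization of $A[i]$ of length $r$, and padding each vector of a factorization of $A[i]$ by $\infty$ in the $i$-th slot reverses this bijection; hence $\CPrk{A}=\CPrk{A[i]}$. For an index $i$ with $a_{ii}\ne\infty$, the scaling replaces each $a_{jk}$ by $a_{jk}-\tfrac12 a_{ii}$ when exactly one of $j,k$ equals $i$, by $a_{jk}-a_{ii}$ when $j=k=i$, and leaves it untouched otherwise. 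Replacing each $b_k$ in a factorization of $A$ by the vector that agrees with $b_k$ except at position $i$, where the coordinate is decreased by $\tfrac12 a_{ii}$, then yields a factorization of the scaled matrix with the same $r$. Since the map is invertible (increase by $\tfrac12 a_{ii}$ at position $i$), both inequalities between CP-ranks follow.

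The only point requiring care is a matrix having both $\infty$-diagonal and finite-diagonal indices. I would first perform all the scalings (which leave the $\infty$-rows and columns fixed, as subtracting a real number from $\infty$ is still $\infty$) and then delete the all-$\infty$ rows and columns at the end. Since the two kinds of steps commute across distinct indices and each preserves CP-rank, iterating gives $\CPrk{A}=\CPrk{C(A)}$. I do not anticipate any genuine obstacle here; the result is essentially the tropical analogue of the classical fact that CP-rank is invariant under diagonal congruence, with the only twist being how to treat the $\infty$-diagonal rows, which is handled cleanly by the remark after Lemma \ref{CP}.
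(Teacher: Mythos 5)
Your proposal is correct and follows essentially the same route as the paper: delete the $\infty$-diagonal rows and columns (justified via the remark after Lemma \ref{CP}), and handle the finite diagonal entries by an invertible shift of the $i$-th coordinate of each factor vector by $\tfrac12 a_{ii}$, which is exactly the paper's diagonal-congruence argument with $\alpha=-\tfrac12 a_{kk}$.
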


\begin{proof}
 Let $A=(a_{ij})=\bigoplus_{j=1}^r b_j \odot b_j^T \in CP_{n}(\mathbb{T})$ and suppose first that $a_{ii}=\infty$ for some $i$,
 $1\leq i \leq n$. Observe that  $A[i]=\bigoplus_{j=1}^r b_j[i] \odot b_j[i]^T \in CP_{n-1}(\mathbb{T})$, which implies that
 $\CPrk{A}\geq \CPrk{A[i]}$. Similarly, we can observe that  $\CPrk{A[i]}\geq \CPrk{A}$, by inserting a component equal to $\infty$ to all $b_j$ at the $i$-th component, since a completely positive matrix $A$
 with $a_{ii}=\infty$, by Lemma \ref{CP} must have all entries in the  $i$-th row and $i$-th column equal to $\infty$.

Now, suppose $A=\bigoplus_{j=1}^r b_j \odot b_j^T \in CP_{n}(\mathbb{T})$ and $a_{ii}\ne \infty$ for $i=1,2\ldots,n$. Choose $\alpha \in\RR$, $k \in \{1,2,\ldots,n\}$, and let $B\in CP_{n}(\mathbb{T})$ and 
 $c_j\in \TT^{n}$ be defined as
$$B_{ij}=\begin{cases}
  a_{ij}+2\alpha,& \text{ if } i=j=k,\\
  a_{ij}+\alpha,& \text{ if  either } i=k \text{ or } j=k,\\
  a_{ij},& \text{ otherwise,}
 \end{cases} \;  \text{ and }\; 
  (c_{j})_i=\begin{cases}
   (b_{j})_i+\alpha,& \text{ if } i=k,\\
   (b_{j})_i,& \text{ otherwise.}
 \end{cases}$$
Observe that $B=\bigoplus_{j=1}^r c_j \odot c_j^T $ and thus $\CPrk{B}\leq \CPrk{A}$. By replacing $\alpha$ by $-\alpha$, we obtain $\CPrk{A}\leq \CPrk{B}$. 
By consecutively applying the above procedure with $\alpha=-\frac{1}{2} a_{kk}$ for all $k=1,2,\ldots,n$,  we conclude that $\CPrk{A}=\CPrk{C(A)}$.
\end{proof}

The next example shows that in general, the positions of nonzero entries in a matrix do not determine the CP-rank. We shall see later that this inconvenience can be circumnavigated by replacing $A$ with $C(A)$ as described above, which is a transformation that  preserves the CP-rank by Lemma \ref{C(A)}.

\begin{Example}\label{ExCA}
 Let $$A=\left[\begin{matrix}
   0&1&2\\
   1&2&3\\
   2&3&4
 \end{matrix}\right] =\left[\begin{matrix}
   0\\
   1\\
   2
 \end{matrix}\right] \odot \left[\begin{matrix}
   0 &
   1 &
   2
 \end{matrix}\right] \in \CP{3}.$$
 By transformation described on page \pageref{CAproc}, we obtain
$$C(A)=\left[\begin{matrix}
   0&0&0\\
   0&0&0\\
   0&0&0
 \end{matrix}\right]=\left[\begin{matrix}
   0\\
   0\\
   0
 \end{matrix}\right] \odot \left[\begin{matrix}
   0 &
   0 &
   0
 \end{matrix}\right]$$
and have $\CPrk{A}=\CPrk{C(A)} = 1$.

Note that by changing the nonzero entries of matrix $A$, we obtain a matrix with different 
CP-rank. For example,  if
$$B=\left[\begin{matrix}
   0&1&1\\
   1&1&1\\
   1&1&1
 \end{matrix}\right],$$
then $$C(B)=\left[\begin{matrix}
   0&\frac{1}{2}&\frac{1}{2}\\
   \frac{1}{2}&0&0\\
   \frac{1}{2}&0&0
 \end{matrix}\right]=
 \left[\begin{matrix}
   0\\
   \frac{1}{2}\\
   \frac{1}{2}
 \end{matrix}\right] \odot \left[\begin{matrix}
   0&
   \frac{1}{2}&
   \frac{1}{2}
 \end{matrix}\right] \oplus \left[\begin{matrix}
   \infty \\
   0 \\
   0
  \end{matrix}\right] \odot 
   \left[\begin{matrix}
   \infty &
   0 &
   0
 \end{matrix}\right].$$ Lemma \ref{rk1mtx} implies that $\CPrk{B}\ne 1$.
 Note that $\CPrk{C(B)}\leq 2$ and by Lemma \ref{C(A)} it follows that $\CPrk{B}=\CPrk{C(B)}=2$.
\end{Example}

\bigskip

\section{Bounding the CP-rank by the graph structure}

In this section, we find bounds for CP-ranks of matrices with the aid of a graph structure that is prescribed to a given matrix. Namely, we define a graph that corresponds to a matrix (depending on whether different elements of the matrix are equal to zero). We find bounds for the CP-rank of all matrices with a given graph structure. Note that using Lemma \ref{C(A)}, we always work under the assumtpion that $A \in \CP{n}$ has a zero diagonal and nonnegative offdiagonal entries.

Given a symmetric matrix $A=(a_{ij})\in M_{n}(\TT)$, we define $G(A)=(V,E)$ to be a simple
graph with $V=\{1,2,\ldots,n\}$, and for $i \neq j$ we have $\{i,j\}\in E$ if and only if $a_{ij}=0.$ 
Recall that $\CPrk{G}$ is the maximum of CP-ranks of all  symmetric 
matrices $A=(a_{ij}) \in M_n(\TT)$ such that, for $i \neq j$, $a_{ij} = 0$ if and only if 
$\{i,j\} \in E(G)$. 

As usual, in a given graph, the path $a= x_0 \sim x_1 \sim \ldots \sim x_{n-1} \sim x_{n}=b$
connecting vertices $a$ and $b$ has \emph{length} $n$ and the  length of the shortest path connecting
vertices $a$ and $b$
 is called the \emph {distance} between $a$ and $b$ and denoted by $d(a,b)$. We let $d(a,b)=\infty$ if there is no path connecting $a$ and $b$, and we let $d(a,a)=0$. The \emph {diameter} of a graph is a maximal distance between any two of its vertices.
 An \emph{empty graph} is a graph consisting of isolated nodes with no edges.
A \emph{complete graph} on $n$ vertices will be denoted by $K_n$ and a \emph{path} with $n$ vertices will be denoted by $P_n$.
 The \emph{edge clique cover number} $\cc{G}$ of a graph $G$ is the minimal cardinality of the collections of complete
 subgraphs such that every edge of $G$ is in one element of the collection.
 
The following two lemmas give us some bounds for the CP-rank of graphs and their subgraphs.

\begin{Lemma}\label{lemma:induced}
If $H$ is an induced subgraph of the graph $G$, then
 $$\CPrk{H}\leq \CPrk{G}.$$
\end{Lemma}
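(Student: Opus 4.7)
The plan is to take a matrix $B \in \CP{n}$ realizing $\CPrk{H}$ and extend it to a matrix $A \in \CP{m}$, where $m=|V(G)|$, with $G(A)=G$ and $\CPrk{A}\geq \CPrk{B}$; this immediately yields $\CPrk{H}=\CPrk{B}\leq \CPrk{A}\leq \CPrk{G}$. In line with the standing convention of the section (Lemma \ref{C(A)}), I would assume $B$ has zero diagonal and, by Lemma \ref{CP}, nonnegative off-diagonal entries.

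For the construction, label the vertices of $G$ so that those of $H$ appear first. Place $B$ as the principal submatrix of $A$ corresponding to $V(H)$, put $0$ in every diagonal entry corresponding to a new vertex, and for each off-diagonal position $\{i,j\}$ involving at least one new vertex, set $a_{ij}=0$ when $\{i,j\}\in E(G)$ and $a_{ij}$ equal to some fixed positive constant otherwise. Then $A$ has zero diagonal and nonnegative off-diagonal entries, so $2a_{ij}\geq 0=a_{ii}+a_{jj}$ and Lemma \ref{CP} gives $A\in\CP{m}$. To verify $G(A)=G$, the only nontrivial case is a pair $i,j\in V(H)$: since $H$ is an \emph{induced} subgraph of $G$, $\{i,j\}\in E(G)$ iff $\{i,j\}\in E(H)$ iff $b_{ij}=0$ iff $a_{ij}=0$. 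For pairs involving a new vertex, the condition is built into the construction.

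For the rank comparison, take any factorization $A=\bigoplus_{k=1}^r c_k\odot c_k^T$ with $r=\CPrk{A}$. Restricting each vector $c_k$ to its $V(H)$-components produces $r$ vectors whose outer-product sum equals the principal submatrix $A[V(H)]=B$, hence $\CPrk{B}\leq r=\CPrk{A}$, which closes the argument. The only place where any care is needed is the verification that $G(A)=G$; this is precisely where the hypothesis that $H$ is induced (and not merely a subgraph) gets used, since otherwise an edge of $G$ between two vertices of $H$ absent from $H$ would force a zero of $B$ incompatible with $G(B)=H$.
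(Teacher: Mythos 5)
Your proof is correct, and its key mechanism --- deleting the components of each vector in a factorization $A=\bigoplus_k c_k\odot c_k^T$ to obtain a factorization of the principal submatrix $B$, whence $\CPrk{B}\leq\CPrk{A}$ --- is exactly the one the paper uses. The difference is the logical direction. The paper starts from an \emph{arbitrary} $A$ with $G(A)=G$, restricts it to a $B$ with $G(B)=H$, and concludes $\CPrk{H}\leq\CPrk{G}$; strictly read, this only bounds the CP-ranks of those pattern-$H$ matrices that happen to arise as principal submatrices of pattern-$G$ matrices, so one still needs to know that a matrix attaining $\CPrk{H}$ can be realized this way. You supply precisely that missing piece: you take $B$ attaining $\CPrk{H}$ and explicitly extend it to an $A$ with $G(A)=G$ (zero diagonal on the new vertices, zeros on the new edges, a positive constant elsewhere), checking complete positivity via Lemma \ref{CP} and using that $H$ is \emph{induced} to verify $G(A)=G$ on pairs inside $V(H)$. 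So your argument is, if anything, the more complete of the two. The one point to phrase carefully is the reduction to zero-diagonal $B$: the normalization $B\mapsto C(B)$ of Lemma \ref{C(A)} preserves the CP-rank but can change the zero pattern (see Example \ref{ExCA}), so this step should be understood as invoking the section's standing convention that matrices with a prescribed pattern are taken with zero diagonal and nonnegative off-diagonal entries, rather than as a normalization applied after the fact to an arbitrary completely positive $B$.
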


\begin{proof}
Let $H$ be an induced subgraph of $G$ and suppose without loss of generality that $V(H)=\{1,2,\ldots,m\}$ 
and $V(G)=\{1,2,\ldots,n\}$, $m \leq n$.
Choose any $A \in M_{n}(\TT)$ with $G(A)=G$, and let $B$ be its $m \times m$ leading principal submatrix. It is clear that $G(B)=H$. If
$A=\bigoplus_{i=1}^k a_i \odot a_i^T$, then $B=\bigoplus_{i=1}^k b_i \odot b_i^T$, where $b_i$ is a vector
obtained from $a_i$ by deleting its last $n-m$ components. Hence $\CPrk{B}\leq \CPrk{A}$ and
so $\CPrk{H}\leq \CPrk{G}$.
\end{proof}

Recall that the join $G \vee H$ of graphs $G$ and $H$, is the graph union $G \cup H$ together with all the possible 
edges joining the vertices in $G$ to the vertices in $H$. We show in the next lemma that joining a graph with a single 
vertex does not change the CP-rank.

\begin{Lemma}\label{lemma:vee}
For any graph $G$ and $w$ a  vertex not in $G$, we have
 $$\CPrk{G \vee w}=\CPrk{G}.$$
\end{Lemma}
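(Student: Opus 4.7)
The plan is to prove two inequalities. The direction $\CPrk{G} \leq \CPrk{G \vee w}$ is immediate from Lemma \ref{lemma:induced}, since $G$ is an induced subgraph of $G \vee w$. For the reverse direction, I take an arbitrary $A \in \CP{n+1}$ with $G(A) = G \vee w$, placing the new vertex $w$ at index $n+1$, and I will construct a factorization of $A$ of length at most $\CPrk{G}$. Under the standing zero-diagonal convention (Lemma \ref{C(A)}), the $w$-row and $w$-column of $A$ vanish identically, so the $n \times n$ principal submatrix $B$ on the remaining vertices satisfies $G(B) = G$ with zero diagonal. In particular $\CPrk{B} \leq \CPrk{G}$, and I fix a factorization $B = \bigoplus_{i=1}^r b_i \odot b_i^T$ of minimal length $r = \CPrk{B}$.

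The central construction is to extend each $b_i$ by appending a single $0$ in the $w$-coordinate: $c_i := (b_i; 0) \in \TT^{n+1}$. I then verify entrywise that $A = \bigoplus_{i=1}^r c_i \odot c_i^T$. The principal $n \times n$ block clearly reproduces $B$, and the bottom-right entry equals $\min_i (0+0) = 0 = A_{n+1,n+1}$. For the coupling entries $A_{k,n+1}$ with $k \leq n$, the construction gives $\min_i ((b_i)_k + 0) = \min_i (b_i)_k$. Because $B_{kk} = \min_i 2(b_i)_k = 0$, no coordinate $(b_i)_k$ can be negative (otherwise the min would be strictly negative), hence $\min_i (b_i)_k = 0 = A_{k,n+1}$. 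This yields $\CPrk{A} \leq r \leq \CPrk{G}$ and completes the reverse inequality.

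The argument is conceptually clean: a vertex adjacent to every vertex of $G$ is absorbed into any existing CP-factorization by padding each factor vector with a zero. The only step requiring a brief justification is the entrywise check that $\min_i (b_i)_k = 0$, which is where the zero-diagonal normalization plays its role; I do not anticipate a substantive obstacle beyond this short verification.
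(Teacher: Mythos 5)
Your proposal is correct and takes essentially the same route as the paper's proof: one inequality via Lemma \ref{lemma:induced}, and the other by padding each factor vector of a factorization of the principal submatrix $B$ with a single $0$ in the $w$-coordinate. Your explicit entrywise check that $\min_i (b_i)_k = 0$ (via the zero-diagonal normalization) merely spells out a step the paper leaves as an observation.
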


\begin{proof}
Take any $A \in M_{n+1}(\TT)$ with $G(A)=G\vee w$. Hence,  $A$ is a direct sum of matrix $B \in M_{n}(\TT)$, $G(B)=G$, 
with the size one zero matrix. There exist $b_i \in  \TT^n$, $i=1,2,\ldots,k$, $k \leq \CPrk{G}$, such that 
$B=\bigoplus_{i=1}^k b_i \odot b_i^T$.
Define $a_i=\left[ b_i^T \; 0\right]^T \in \TT^{n+1}$ and observe that $A=\bigoplus_{i=1}^k a_i \odot a_i^T$ and
hence $\CPrk{A} \leq k$. This implies that $\CPrk{G \vee w}\leq \CPrk{G}$.
By Lemma \ref{lemma:induced}, it follows that $\CPrk{G \vee w}=\CPrk{G}$.
\end{proof}

Now, we define the \emph{vertex clique cover} $\gamma$ of a graph $G$ as a collection of $r$ complete subgraphs
such that every vertex of $G$ is in some element of the collection. One can always assume that the vertices 
of $G$ are 
labeled so that
$$
\gamma = (K_{q_1},K_{q_2},\ldots,K_{q_k},\underbrace{K_1,\ldots,K_1}_l)= (K_{q_1},K_{q_2},\ldots,K_{q_k}, l K_1),$$
  where $q_1\geq q_2 \geq \ldots \geq q_k \geq  2$. Define  the  \emph{vertex clique cover number} of $\gamma$ as
  $$
  \theta(\gamma)= k+  \sum_{i=1}^k (i-1)q_i+kl+\left\lfloor\frac{l^2}{4}\right\rfloor.
  $$
 It is worth noting that 
a vertex clique cover number is the same as a chromatic number of the complement of the graph. In Theorem \ref{theta}, we will prove that CP-rank of a matrix $A$ is bounded by 
 $\theta(\gamma)$ for any  vertex clique cover $\gamma$ of $G=G(A)$.
  
\begin{Example}\label{paw}
  Note that a vertex clique cover is not unique. Let $G$ be a paw graph.
    	\begin{center}
			\begin{tikzpicture}[style=thick]
				\draw (0,0) -- (-1,-0.5) -- (-1,0.5)  -- (0,0) -- (1,0);
				\draw[fill=white] (0,0) circle (1mm)  (-1,0.5) circle (1mm) (-1,-0.5) circle (1mm) (1,0) circle (1mm);
	   			\draw (-1.5,0) node[anchor=east]{$G=$};	
			\end{tikzpicture}
	\end{center}
 Its vertex clique covers are
 $$ \gamma_1= (K_{3},K_{1}) \; \text{ and } \; \gamma_2 = (2 K_{2}),$$
 so
 $\theta(\gamma_1)=2$ and  $\theta(\gamma_2)=4$.
\end{Example}

The next theorem specifies an upper bound for the CP-rank of a matrix according to the vertex clique cover number of a graph corresponding to the matrix.

\begin{Theorem}\label{theta}
 Choose $A\in \CP{n}$.
 If $G(A)$ is a nonempty graph or $n \geq 5$, then for every vertex clique cover $\gamma$ of $G(A)$, we have
\begin{equation*}
  \CPrk {A}\leq\theta(\gamma) .
\end{equation*}
Otherwise, if $G(A)$ is an empty graph with $n \leq 4$, then
\begin{equation*}
  \CPrk {A}= n.
\end{equation*}
\end{Theorem}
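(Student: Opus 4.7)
\begin{Proof*}
By Lemma~\ref{C(A)} I may replace $A$ by $C(A)$ and assume $A$ has zero diagonal and nonnegative off-diagonal entries. Write $\gamma=(K_{q_1},\ldots,K_{q_k},lK_1)$, let $V_i$ be the vertex set of the $i$-th nonsingleton clique of $\gamma$, and let $V_0=\{w_1,\ldots,w_l\}$ be the isolated vertices. If $G(A)$ is empty and $n\leq 4$, the upper bound $\CPrk{A}\leq n$ follows from the standard row factorization $A=\bigoplus_{i=1}^n b^{(i)}\odot(b^{(i)})^T$ with $b^{(i)}(j)=a_{ij}$ (the min over $i$ of $a_{ij}+a_{ik}$ is attained at $i=j$). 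For the matching lower bound, a rank-$1$ summand $b\odot b^T$ with $b(i)=b(j)=0$ would force $a_{ij}=0$, contradicting $G(A)=\emptyset$, so each summand supplies at most one zero on the diagonal and at least $n$ summands are required.

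For the main case, if $k=0$ then $l=n\geq 5$ and Theorem~4 of \cite{MR2927632} directly gives $\CPrk{A}\leq\max\{n,\lfloor n^2/4\rfloor\}=\lfloor n^2/4\rfloor=\theta(\gamma)$. If $k\geq 1$, I would explicitly write $A$ as a tropical sum of exactly $\theta(\gamma)$ rank-$1$ matrices partitioned into four families: (a) $k$ \emph{clique} summands $\beta^{(i)}$ with $\beta^{(i)}(v)=0$ for $v\in V_i$ and $\infty$ elsewhere; (b) for each $1\leq i<j\leq k$ and each $s\in V_j$ a \emph{clique-clique} summand $\delta^{(i,j,s)}$ defined by $\delta(s)=0$, $\delta(v)=a_{v,s}$ for $v\in V_i$, and $\infty$ elsewhere, contributing $\sum_{i=1}^k(i-1)q_i$ summands in total (since $|V_j|=q_j\leq q_i$ whenever $i<j$); (c) for each $i$ and each $j$ an analogous \emph{clique-isolated} summand $\mu^{(i,j)}$ with $w_j$ playing the role of $s$, giving $kl$ summands; and (d) $\lfloor l^2/4\rfloor$ further summands recovering the off-diagonal of $A[V_0,V_0]$.

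The verification that the tropical sum equals $A$ is a block-by-block check: each $V_i\times V_i$ block is hit by $\beta^{(i)}$, each $V_i\times V_j$ block by $\{\delta^{(i,j,s)}\}_{s\in V_j}$, each column $V_i\times\{w_j\}$ by $\mu^{(i,j)}$, and the $V_0$-diagonal by any $\mu^{(i,j)}$. At every position that is not the intended target of a given summand, that summand contributes either $+\infty$ or a sum of nonnegative entries of $A$; in the latter case the value is automatically $\geq a_{ij}$ because the on-block targets are $0$ or coincide with the $A$-entries used. Hence the tropical minimum is not disturbed by these ``stray'' contributions.

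The main obstacle is step (d). For $l\geq 4$ I can simply apply Theorem~4 of \cite{MR2927632} to $A[V_0,V_0]$, since $\max\{l,\lfloor l^2/4\rfloor\}=\lfloor l^2/4\rfloor$ in that range. For $l\in\{2,3\}$ the Cartwright--Chan bound is $l>\lfloor l^2/4\rfloor$, but because $k\geq 1$ the $V_0$-diagonal is already enforced to be $0$ by the summands in (c), so I only need $\lfloor l^2/4\rfloor$ summands matching the $\binom{l}{2}$ off-diagonal entries with the diagonal constraint relaxed to ``$\geq 0$''. For $l=2$ a single summand $\eta(w_1)=0$, $\eta(w_2)=a_{w_1w_2}$ (with $\infty$ elsewhere) suffices. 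For $l=3$ a short case analysis is needed: writing $a=a_{w_1w_2}$, $b=a_{w_1w_3}$, $c=a_{w_2w_3}$ (nonnegative), at most one of the strict inequalities $a>b+c$, $b>a+c$, $c>a+b$ can hold; in the ``triangle'' case the two vectors $(0,a,b)$ and $(a,0,c)$ on $V_0$ do the job, and in each of the three failure cases a small perturbation (e.g.\ using $c-a$ in one coordinate when $c>a+b$) still produces an explicit pair of rank-$1$ summands whose tropical sum matches the three off-diagonal values.
\end{Proof*}
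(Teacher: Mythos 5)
Your decomposition in the main case is essentially identical to the paper's: your families (a)--(d) are exactly the paper's $A_1,\ldots,A_4$, with the same counts, the same appeal to \cite[Theorem 4]{MR2927632} for $l\geq 4$, and the same observation that for $l\in\{2,3\}$ the summands in (c) already supply the zeros on the $V_0$-diagonal. The only soft spot there is your $l=3$ subcase, which is left as an unverified sketch (``a small perturbation\ldots still produces an explicit pair''); the paper avoids the case analysis altogether by taking $a_{n-1,n}$ maximal among the three off-diagonal entries and exhibiting the pair $[\ldots,\infty,0,a_{n-2,n-1},\infty]^T$ and $[\ldots,\infty,a_{n-2,n},a_{n-1,n},0]^T$, which you should either reproduce or actually carry out your perturbation argument.

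The genuine error is in the empty-graph case with $n\leq 4$. The ``row factorization'' $A=\bigoplus_{i=1}^n b^{(i)}\odot (b^{(i)})^T$ with $b^{(i)}_j=a_{ij}$ is not a factorization of $A$: its $(j,k)$ entry is $\min_i\left(a_{ij}+a_{ik}\right)$, and your parenthetical claim that this minimum is attained at $i=j$ amounts to the triangle inequality $a_{ij}+a_{ik}\geq a_{jk}$ for all $i$, which does \emph{not} follow from complete positivity over $\TT$. By Lemma \ref{CP}, a symmetric matrix with zero diagonal is completely positive as soon as its off-diagonal entries are nonnegative; so, for example, $n=3$ with zero diagonal, $a_{12}=a_{13}=1$ and $a_{23}=3$ lies in $\CP{3}$, yet your factorization produces $(2,3)$ entry $\min(1+1,\,0+3,\,3+0)=2\neq 3$. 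The conclusion $\CPrk{A}\leq n$ is still true, but it must be obtained as the paper does, from \cite[Theorem 4]{MR2927632} together with $\left\lfloor n^2/4\right\rfloor\leq n$ for $n\leq 4$, not from an explicit construction that fails. Your matching lower bound (each rank-one summand can contribute at most one diagonal zero when $G(A)$ has no edges) is correct and is the same as the paper's.
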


\begin{proof} Suppose first that $G=G(A)$ is nonempty graph. We will construct $n \times n$ matrices 
$A_1$, $A_2$, $A_3$ and $A_4$, $A=A_{1}\oplus A_{2}\oplus A_{3}\oplus A_{4}$, which will correspond to subgraphs of $G$,
and their CP-ranks will be bounded by
$k$, $\sum\limits_{i=1}^k (i-1)q_i$, $kl$ and $\left\lfloor\frac{l^2}{4}\right\rfloor$, respectively.

\begin{enumerate}
\item 
If $k=0$, then $A_1$ is a zero matrix. Suppose $k \geq 1$.
For $i=1,2, \ldots,k$  denote the components of $x^{(i)}\in \mathbb{T}^{n}$ by
$$x_{j}^{(i)}=\left\{\begin{array}{ll}
0, & \text{ if } \ q_{1}+\ldots+q_{i-1}+1\leq j\leq q_{1}+\ldots+q_{i-1}+q_{i},\\
\infty, & \text{ otherwise,}
\end{array}\right.$$
for all $j=1,2, \ldots,n$. 
Define
$$
A_{1}=\bigoplus_{i=1}^{k}x^{(i)}\odot \left(x^{(i)}\right)^{T}
$$
is a sum of $k$ matrices of $\mathrm{C}\mathrm{P}$-rank one.
Note that $A_1$ coincides with $A$ at all elements that correspond to the edges of cliques $K_{q_1}$ to $K_{q_k}$ of $G$ .

\item
%
If $k \leq 1$, then $A_2$ is a zero matrix. Suppose that  $k \geq 2$.
For $i=1,2, \ldots,k-1$, $j=i+1,i+2, \ldots,k$ and $s=1,2, \ldots,q_{j}$ denote the components of
$y^{(i,j,s)}\in \mathbb{T}^{n}$ by
$$y_{t}^{(i,j,s)}=\left\{\begin{array}{ll}
0, & \mathrm{i}\mathrm{f}\ t=q_{1}+\ldots+q_{j-1}+s,\\
a_{t,q_{1}+\ldots+q_{j-1}+s}, & \mathrm{i}\mathrm{f}\ q_{1}+\ldots+q_{i-1}+1\leq t\leq q_{1}+\ldots+q_{i},\\
\infty, & \text{otherwise}.
\end{array}\right.$$

Let us define
$$
A_{2}=\bigoplus_{i=1j}^{k-1}\bigoplus_{=i+1}^{k}\bigoplus_{s=1}^{q_{j}}y^{(i,j,s)}\odot \left(y^{(i,j,s)}\right)^{T}.
$$
Note that $A_{2}$ is a sum of $\displaystyle \sum_{j=1}^{k}(j-1)q_{j}$ matrices of $\mathrm{C}\mathrm{P}$-rank one, that coincides with the matrix $A$ at all elements that correspond to the edges between any of the cliques $K_{q_1}$ to $K_{q_k}$ of $G$.

\item 
If $k=0$ or $l=0$, then $A_3$ is a zero matrix.
Suppose that $k, l \geq 1$. For $i=1, \ldots, k$ and $j=1, \ldots, l$ denote the components of $z^{(i,j)}\in \mathbb{T}^{n}$ by
$$z_{t}^{(i,j)}=\left\{\begin{array}{ll}
0, & \mathrm{i}\mathrm{f}\ t=q_{1}+\ldots+q_{k}+j,\\
a_{t,q_{1}+\ldots+q_{k}+j}, & \mathrm{i}\mathrm{f}\ q_{1}+\ldots+q_{i-1}+1\leq t\leq q_{1}+\ldots+q_{i},\\
\infty, &\text{otherwise}.
\end{array}\right.$$

Let
$$
A_{3}=\bigoplus_{i=1}^{k}\bigoplus_{j=1}^{l}z^{(i,j)}\odot \left(z^{(i,j)}\right)^{T}
$$
be a matrix defined as a sum of $kl$ matrices of $\mathrm{C}\mathrm{P}$-rank one.
Note that $A_3$ coincides with the matrix $A$ at all elements that correspond to the edges between any of the clique $K_1$ and any of the cliques $K_{q_1}$ to $K_{q_k}$ of $G$.

\item 

If $l \leq 1$, then $A_4$ is a zero matrix and for $l \geq 4$  let the matrix $A_4$ be defined by 
 $$(A_{4})_{ij}=\left\{\begin{array}{ll}
\infty, & \mathrm{i}\mathrm{f}\ i \le q_{1}+\ldots+q_{k} \text{ or } j \le q_{1}+\ldots+q_{k},\\
a_{ij}, & \mathrm{o}\mathrm{t}\mathrm{h}\mathrm{e}\mathrm{r}\mathrm{w}\mathrm{i}\mathrm{s}\mathrm{e}.
\end{array}\right.$$ 
Note that $A_4$ coincides with the matrix $A$ at all elements that correspond to the edges between any of the cliques $K_1$ of $G$.

If $l \geq 4$, then note that $A_4$ can be written as a sum of at most $\left\lfloor \frac{l^2}{4}\right\rfloor$ CP-rank one matrices by \cite[Theorem 4]{MR2927632}.

In the case $2\leq l\leq3$, observe that $n\geq 5$ implies that $k >0$. This further implies that $A_3 \ne 0$, and thus $\left(A_3\right)_{ii} =0$ for $i \geq q_{1}+\ldots+q_{k} +1$, by the construction of $A_3$ above.
For $l=2$, matrix $A_4$ is of CP-rank $\left\lfloor \frac{2^2}{4}\right\rfloor=1$, since $A_4=[\infty,\ldots,\infty,0,a_{n-1,n}]^T \odot [\infty,\ldots,\infty,0,a_{n-1,n}]$.
For $l=3$, assume without loss of generality that $a_{n-1,n}=\max\{a_{n-2,n-1},a_{n-2,n},a_{n-1,n}\}$. In this case, we have 
$A_4= a\odot a^T \oplus b \odot b^T$,
where $a= \left[\begin{matrix}
\infty & \ldots &\infty&0&a_{n-2,n-1}&\infty\end{matrix}
\right]^T\in \TT^n$ and $b=
\left[\begin{matrix}
\infty&\ldots&\infty&a_{n-2,n}&a_{n-1,n}&0\end{matrix}
\right]^T\in \TT^n$. It follows that $\CPrk{A_4}=2=\left\lfloor \frac{3^2}{4}\right\rfloor$.

\end{enumerate}

Observe that
$$
A=A_{1}\oplus A_{2}\oplus A_{3}\oplus A_{4}
$$
and therefore the inequality in the statement follows. 

If $G$ is an empty graph, then $k=0$. 
In addition, if $n=l\geq 5$, we construct $A_4$ as above, and then $A=A_4$ is a sum of at most $\left\lfloor\frac{n^2}{4}\right\rfloor$ matrices of CP rank one. If $n \leq 4$, then observe that $\left\lfloor\frac{n^2}{4}\right\rfloor \leq n$, so by \cite[Theorem 4]{MR2927632} $A$ can be written as a sum of at most $n$ matrices of CP rank one. However, since $G$ is an empty graph, each summand with CP rank one can have at most one zero element. Since $A=A_4$ has zeroes on the diagonal, this implies that there must be exactly $n$ summands with CP rank one. 
\end{proof}

\begin{Remark}\label{manycases}
 Note that $\theta(\gamma)$ is a much smaller number than $\left\lfloor\frac{n^2}{4}\right\rfloor$ whenever $k \ge 1$, so there are infinite families of graphs and consequently infinite families of matrices for which we have found a much lower bound for their CP rank.
For example, when $k=1$ (and similarly, one can reason for all other $k \geq 1$), $\theta(\gamma)=1+l+\left\lfloor\frac{l^2}{4}\right\rfloor$, which (since $q_1$ can be arbitrarly large) can actually be arbitrarily smaller than $\left\lfloor\frac{(q_1+l)^2}{4}\right\rfloor$.
\end{Remark}

\begin{Example}\label{paw}
 Theorem \ref{theta} implies that any matrix
 $$A=\left[
 \begin{matrix}
  0 & 0 & 0 & a\\
  0 & 0 & 0& b\\
  0 & 0 & 0 & 0\\
  a & b & 0 & 0  
 \end{matrix}
 \right] \in \CP{4},$$
 where $a,b > 0$, which corresponds to the paw graph from Example \ref{paw}, has 
 $\CPrk{A} \leq \theta(\gamma_1) < \theta(\gamma_2)$. Note that by Lemma \ref{rk1mtx} it follows that
 $\CPrk{A} =2$.
\end{Example}

The next example shows that CP-rank of a matrix $A$ with an empty graph can be strictly greater than
 $n$, when $n > 4$.
 
\begin{Example}\label{CPrk6}
 Let $$A=\left[\begin{matrix}
   0&1&1&3&3\\
   1&0&3&1&1\\
   1&3&0&1&1\\
   3&1&1&0&3\\
   3&1&1&3&0 
 \end{matrix}\right] \in \CP{5}$$
 and let us prove that $\CPrk{A} = 6$. 
 
 Suppose there exist vectors $b_1,b_2,\ldots,b_5 \in \TT^5$ such that
  $$A=\bigoplus_{i=1}^5 b_i \odot b_i^T.$$
 Since all diagonal entries of $A=(a_{ij})$ are equal to zero and all offdiagonal entries are nonzero, it follows that each 
 $b_i=[b_{i1},b_{i2},\ldots,b_{i5}]^T$ has nonnegative entries with exactly one zero entry. Without any 
 loss of generality, we asume that $b_{ii}=0$, $i=1,2,\ldots,5$.

 Let us define ${\mathcal E}=\{(1,2),(1,3),(2,4),(2,5),(3,4),(3,5)\}$ the set of indices such that for $k < l$ we have 
  $a_{kl}=1$ if and only if  $(k,l)\in {\mathcal E}$.
   Note that $b_{kl}=b_{kk}+b_{kl} \geq a_{kl}$ for all $1 \leq k < l \leq 5$, which gives us 
   $b_{kl} \geq 1$ for $(k,l) \in {\mathcal E}$ and    $b_{kl} \geq 3$ for $(k,l) \notin {\mathcal E}$.
   
Moreover, for  any pair $(k,l) \notin {\mathcal E}$, $1\leq k<l\leq5$, and any $i$, we have $b_{ki}+b_{il} \geq a_{kl} = 3$. This gives us
  \begin{align}\label{eq:max}
   b_{21}+b_{24} &\geq 3 & b_{31}+b_{34} &\geq 3 & b_{21}+b_{25} &\geq 3 \notag \\
   b_{31}+b_{35} &\geq 3&  b_{12}+b_{13} &\geq 3 & b_{42}+b_{43} &\geq 3\\
   b_{52}+b_{53} &\geq 3 & b_{24}+b_{25} &\geq 3 & b_{34}+b_{35} &\geq 3. \notag
  \end{align}
Note that $b_{ik}+b_{il} \geq 2$ for all $i$ distinct from $k$ and $l$ and thus
   \begin{equation}\label{eq:min}
      \min \{b_{kl},b_{lk}\}=1.
    \end{equation}
for any $(k,l) \in {\mathcal E}$.

  Choose  $(k,l)=(1,2)$ and by $\eqref{eq:min}$ we have $b_{21}=1$ or $b_{12}=1$. In the case
  $b_{21}=1$, we apply \eqref{eq:max} and \eqref{eq:min}
  for several times, to observe that $b_{24} \geq 2$, $b_{25}\geq 2$,  $b_{42}=1$, $b_{43}\geq 2$, $b_{34}=1$, 
  $b_{35}\geq2$, $b_{53}=1$, $b_{52}\geq 2$ and so $b_{25}=1$, a contradiction. Similar arguments give us a 
  contradiction also in the case $b_{12} = 1$.
  Hence, we proved that $\CPrk{A} \geq 6$ and by Theorem \ref{theta}, it follows that  $\CPrk{A} = 6$. 
\end{Example}


In the rest of this section, we apply the above results to the study of  the CP-rank of 0/1 matrices over $\TT$. Note again that $0$ and $1$ here  represent real numbers. Equivalently, one could also  study $0/\infty$ matrices, where $0$ and $\infty$ represent  the tropical identity and tropical zero.

It can be seen that CP-rank of a 0/1 matrix $A$ is equal to the edge clique cover number of $G(A)$, denoted by $\cc{G(A)}$ \cite[Proposition 3]{MR2927632}. Note that it was proved that the  edge clique cover number of a graph is equal to
the intersection number of the graph \cite{MR0464695}. Since determining the intersection number is an NP-complete problem \cite{MR0480180}, it seems useful to obtain some easily calculable bounds for the CP-rank of
a 0/1 matrix and the following two propositions offer some results in this direction, by using the same approach as in the proof of Theorem \ref{theta}.

\begin{Proposition}
 If $A\in \CP{n}$ is a 0/1 matrix such that $G(A)$ is an empty graph, then
 \begin{equation*}
  \CPrk {A}=n.
\end{equation*}
 \end{Proposition}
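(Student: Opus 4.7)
The plan is to establish $\CPrk{A} = n$ by matching an explicit decomposition against a simple counting lower bound. First I would invoke the standing assumption of the section (justified by Lemma \ref{C(A)}) that $A$ has zero diagonal. Combined with the hypotheses that $A$ is $0/1$ and $G(A)$ is empty, this pins $A$ down uniquely: $a_{ii}=0$ and $a_{ij}=1$ for every $i\neq j$.

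For the upper bound, I would exhibit $n$ rank-one summands explicitly. Define $b_i\in\TT^n$ by $(b_i)_i=0$ and $(b_i)_j=1$ for $j\neq i$. A direct calculation shows that $b_i\odot b_i^T$ has a $0$ at position $(i,i)$, the value $1$ at positions $(i,j)$ and $(j,i)$ for $j\neq i$, and the value $2$ everywhere else. Taking the tropical sum over $i=1,2,\ldots,n$ gives the minimum $0$ on the diagonal and $1$ off the diagonal, which is exactly $A$. Hence $\CPrk{A}\leq n$.

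For the lower bound, suppose $A=\bigoplus_{k=1}^r b_k\odot b_k^T$. The diagonal condition $a_{ii}=0$ forces every entry of every $b_k$ to be nonnegative, and for each $i$ at least one summand must satisfy $(b_k)_i=0$. Set $S_k=\{i:(b_k)_i=0\}$; the family $\{S_k\}$ must cover $\{1,2,\ldots,n\}$. The key observation is that no $S_k$ can contain two distinct indices: if $i,j\in S_k$ then the $(i,j)$-entry of $b_k\odot b_k^T$ would equal $0$, whereas the hypothesis $a_{ij}=1$ forces $(b_k)_i+(b_k)_j\geq 1$ for every $k$. Thus $|S_k|\leq 1$, and the covering requirement yields $r\geq n$.

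The argument is essentially a pigeonhole count; the only conceptually important step is the last one, namely that a single rank-one summand cannot absorb more than one diagonal zero. This is precisely where the empty-graph hypothesis enters, so I expect no serious obstacle beyond stating this observation cleanly.
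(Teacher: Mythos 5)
Your proof is correct and follows essentially the same route as the paper: the identical explicit decomposition with $(b_i)_i=0$, $(b_i)_j=1$ for the upper bound, and the same pigeonhole argument for the lower bound (the paper phrases it contrapositively, assuming $n-1$ summands and finding one with two zero entries, which forces a zero off-diagonal entry of $A$). No substantive differences.
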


\begin{proof}
 Let us define $v^{(i)} \in \TT^n$, $i=1,2,\ldots,n$, by
  $$v_{j}^{(i)}=\left\{\begin{array}{ll}
      0, &\text{ if } i=j,\\
      1, & \text{ if } i\ne j.
\end{array}\right.$$
It is easy to verify that $$A=\bigoplus_{i=1}^n v^{(i)}\odot \left( v^{(i)} \right)^T$$
and so $\CPrk{A} \leq n$.
Suppose now $A=\bigoplus\limits_{i=1}^{n-1} u^{(i)}\odot \left( u^{(i)} \right)^T$. Since $A$ has $n$ diagonal entries equal to 0, there exists $j$ such that $u^{(j)}_t=u^{(j)}_s=0$ for some $1\leq s,t \leq n$. It follows that $\left( u^{(i)}\odot \left( u^{(i)} \right)^T\right)_{ts}=0$ and thus $a_{ts} \neq 0$, a contradiction. Therefore, 
$\CPrk{A}=n$.
\end{proof}

Note that the above proposition is not valid for matrices which are not 0/1, as Example \ref{CPrk6} shows.

%
%
%

For any given matrix $A \in M_n(\TT)$, we define its support, $\supp(A) \in M_n(\TT)$, by 
$$\supp(A)_{ij}=\left\{\begin{array}{ll}
      0, &\text{ if } a_{ij}=0,\\
      1, & \text{ if } a_{ij} \ne 0.
\end{array}\right.$$
In Example \ref{ExCA}, we showed that the CP-rank of $A$ and $\supp(A)$ do not necessarily coincide.

\begin{Lemma}\label{cc}
 If $G$ is a graph with  $\CPrk{G}=\cc{G}$, then for every  $A=(a_{ij}) \in \CP{n}$  with $G(A)=G$ choose
 edge clique cover $Q_1,Q_2,\ldots,Q_{\cc{G}}$. Then
  \begin{equation}\label{eq:cc}
          A = \bigoplus_{i=1}^{\cc{G}} b_i \odot b_i^T
        \end{equation}
    and    the following two statements hold:
     \begin{enumerate} 
      \item[(a)]\label{cca}We have a bijective correspondence between the  cliques 
      $Q_1,Q_2,\ldots,Q_{\cc{G}}$ and the 
      summands $b_i$ of the sum, where the vertices of the clique $i$  correspond to the zero entries 
      of $b_i$.
   \item[(b)] If $a_{uv}$ is the minimal nonzero entry in $A$, then for every $i=1,2,\ldots,\cc{G}$ and $j=1,2,\ldots,n$, we have $(b_i)_j =0$ or  $(b_i)_j \geq a_{u,v}$.
\end{enumerate}
\end{Lemma}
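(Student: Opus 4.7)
The plan is to derive both (a) and (b) from a minimum CP decomposition, using the hypothesis to control its length. First I would invoke $\CPrk A\le\CPrk G=\cc G$ and fix a decomposition $A=\bigoplus_{i=1}^{r}b_i\odot b_i^T$ with $r\le\cc G$. Since $A$ has zero diagonal and nonnegative off-diagonal entries (by the standing assumption after Lemma \ref{C(A)}), each $(b_i)_j\ge 0$, because $2(b_i)_j=(b_i\odot b_i^T)_{jj}\ge a_{jj}=0$. Set $Z_i=\{j:(b_i)_j=0\}$. For distinct $j,k\in Z_i$ we have $(b_i\odot b_i^T)_{jk}=0$, which forces $a_{jk}=0$ and hence $\{j,k\}\in E(G)$; so each $Z_i$ is a clique in $G$. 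Further, $a_{jj}=0$ and nonnegativity place each vertex $j$ into some $Z_i$, while $a_{jk}=0$ for $\{j,k\}\in E(G)$ similarly forces some $Z_i$ to contain both $j$ and $k$. Thus $\{Z_1,\ldots,Z_r\}$ is an edge clique cover of $G$; minimality of $\cc G$ gives $r\ge\cc G$ and hence $r=\cc G$. Setting $Q_i:=Z_i$ then proves \eqref{eq:cc} and (a).

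For (b) I would fix $i,j$ with $(b_i)_j>0$, which by (a) means $j\notin Q_i$. For every $k\in Q_i$, the identity $(b_i\odot b_i^T)_{jk}=(b_i)_j$ together with tropical minimality yields $a_{jk}\le(b_i)_j$. If some $k\in Q_i$ satisfies $\{j,k\}\notin E(G)$, then $a_{jk}\ge a_{uv}$ and we conclude $(b_i)_j\ge a_{uv}$ at once.

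The main obstacle is the remaining case where $j$ is adjacent to every vertex of $Q_i$, so that $Q_i\cup\{j\}$ is itself a clique of $G$. My strategy here is to exclude this case by working from the outset with an extremal decomposition: among all rank-$\cc G$ CP decompositions of $A$ I choose one that maximizes the total number of zero entries in $b_1,\ldots,b_{\cc G}$. In such a decomposition I expect every $Q_i$ to be a maximal clique, because otherwise one should be able to set $(b_i)_j:=0$ and compensate by raising each $(b_i)_\ell$ (for $\ell\notin Q_i\cup\{j\}$) by $(b_i)_j$, producing another rank-$\cc G$ decomposition with strictly more zero entries. The delicate verification is that this compensation preserves the tropical identity at entries $(k,\ell)$ with $k\in Q_i$ and $\ell\notin Q_i\cup\{j\}$; a careful analysis using the bound $(b_i)_j<a_{uv}$ (the inequality we are trying to contradict) together with the minimality of $a_{uv}$ among the nonzero entries of $A$ should carry this through. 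Once every $Q_i$ is a maximal clique, (b) reduces to the preceding paragraph.
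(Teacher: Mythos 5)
Your treatment of part (a) is correct and essentially coincides with the paper's argument: nonnegativity of the $(b_i)_j$ from the zero diagonal, the zero sets $Z_i$ forming cliques that cover every vertex and every edge, and the count $r\ge\cc{G}$ forcing $r=\cc{G}$ and the claimed correspondence.

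Part (b), however, is left with a genuine gap, and you have put your finger on exactly where it sits. Your easy case ($j$ non-adjacent to some $k\in Q_i$, whence $a_{uv}\le a_{jk}\le (b_i)_j$) is fine. But the remaining case, where $Q_i\cup\{j\}$ is a clique of $G$, is not closed: the extremal-decomposition strategy is only a plan, and the ``delicate verification'' you defer is precisely where it fails. If you set $(b_i)_j:=0$ and raise $(b_i)_\ell$ by $\varepsilon:=(b_i)_j$ for every $\ell\notin Q_i\cup\{j\}$, then for $k\in Q_i$ and such $\ell$ the entry $(b_i\odot b_i^T)_{k\ell}=(b_i)_\ell$ increases by $\varepsilon>0$; if summand $i$ was the unique summand attaining the minimum $a_{k\ell}$ at that position, the modified family no longer sums to $A$, and nothing in your setup rules this out. (Omitting the compensation fails as well, since then $(b_i\odot b_i^T)_{j\ell}$ drops to $(b_i)_\ell$, which is only known to satisfy $(b_i)_\ell\ge a_{j\ell}-\varepsilon$, not $(b_i)_\ell\ge a_{j\ell}$.) Moreover, even if the perturbation could be repaired, you would obtain (b) only for one specially chosen decomposition, whereas the statement, and its later use in the diameter theorem, concerns the given decomposition \eqref{eq:cc}. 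It is worth noting that the paper's own proof of (b) steps over the very same point: it asserts that because $j$ and the vertices $k_t$ of $Q_i$ do not belong to the same clique of the cover, some $a_{jk_t}$ is nonzero, which silently assumes $j$ is not adjacent to all of $Q_i$. So you have correctly isolated the crux of (b), but your proposal does not resolve it.
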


\begin{proof}
 Since $G(A)=G$ and $\CPrk{G}=\cc{G}$, we know that $A = \bigoplus_{i=1}^{\cc{G}} b_i \odot b_i^T$ for some vectors $b_i \in \TT^n$.
For every clique $Q$ from the clique cover $Q_1,Q_2,\ldots,Q_{\cc{G}}$, we have 
$a_{jk}=0$ for all $j,k, \in Q$.
This implies that there exists $i$  such that $(b_i)_j=(b_i)_k=0$ 
for all $j,k \in Q$.
 The fact that the number of summands of rank one matrices is exactly equal to $\cc{G}$, implies that
  for every clique $Q_i$ in $G$, there exists some vector $b_i$ with components equaling zero at least at all positions corresponding to the vertices of  clique $Q_i$.   
By Lemma \ref{C(A)} and the definition of operations in $M_n(\TT)$, we know that all positions that correspond to vertices outside clique $Q_i$, have to be nonzero. This yields the desired bijective correspondence.

 Now, suppose $a_{uv}$ is the minimal nonzero entry in $A$ and choose $i$ such that $(b_i)_j >0$. By the above, $b_i$ corresponds to a clique $Q_i$ in $G$, so there exist indices $k_1,k_2, \ldots, k_r$ such that  $(b_i)_{k_t} =0$ for all $t=1,2,\ldots,r$ and
 $j \notin Q_i$. Then $a_{jk_t} \leq (b_i \odot b_i^T)_{j,k_t}=(b_i)_j$  for all $t=1,2,\ldots,r$. Since vertices corresponding to $j$ and $k_t$ do not belong to the same clique, there exists at least one $t$ such that $a_{jk_t} \neq 0$, and therefore $a_{uv} \leq a_{jk_t} \leq (b_i)_j$.  
\end{proof}

%
%
%

By \cite[Proposition 3]{MR2927632}, we have that the CP-rank of \supp(A), which is a 0/1 matrix, is equal to the edge clique cover number of $G(A)$. Therefore it follows that in order to find 
lower bounds for the CP-rank of any matrix, it suffices to study the CP-rank of its corresponding support as the following shows.

\begin{Corollary}\label{ineq}
For any matrix $A \in \CP{n}$ we have 
 \begin{equation*}
   \CPrk {G(A)}\geq  \CPrk {A}\geq \CPrk {\supp(A)}=\cc{G(A)}.
 \end{equation*}
 \end{Corollary}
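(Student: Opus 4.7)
\textbf{Proof plan for Corollary \ref{ineq}.}

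The leftmost inequality $\CPrk{G(A)} \geq \CPrk{A}$ is immediate from the definition of $\CPrk{G(A)}$ as the supremum of $\CPrk{B}$ taken over all matrices $B$ with $G(B) = G(A)$. The rightmost equality $\CPrk{\supp(A)} = \cc{G(A)}$ is exactly \cite[Proposition 3]{MR2927632}, since by construction $G(\supp(A)) = G(A)$. So everything reduces to establishing the middle inequality
$$\CPrk{A} \geq \cc{G(A)}.$$

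For this, I would use exactly the clique-extraction idea already present in the proof of Lemma \ref{cc}, but now as a one-directional bound that requires no hypothesis on $G$. Set $r = \CPrk{A}$ and, using Lemma \ref{C(A)} to replace $A$ with $C(A)$ if necessary, write
$$A = \bigoplus_{i=1}^{r} b_i \odot b_i^T, \qquad b_i \in \TT^n.$$
For each $i$, define $Q_i = \{\, j : (b_i)_j = 0 \,\} \subseteq V(G(A))$.

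The two facts I would verify are: (i) each $Q_i$ with $|Q_i|\geq 2$ induces a clique in $G(A)$, because for $j \neq k$ in $Q_i$ we have $(b_i \odot b_i^T)_{jk} = 0$, forcing $a_{jk} = 0$ and hence $\{j,k\} \in E(G(A))$; and (ii) the family $\{Q_i\}$ covers every edge of $G(A)$, because if $\{j,k\} \in E(G(A))$ then $a_{jk}=0$, so $\min_i\bigl((b_i)_j + (b_i)_k\bigr) = 0$, and since entries of $b_i$ are bounded below by $0$ (as $A$ has zero diagonal and nonnegative off-diagonal entries), this minimum is attained only when $(b_i)_j = (b_i)_k = 0$ for some $i$, placing both $j$ and $k$ in $Q_i$. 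Discarding the $Q_i$ with $|Q_i|\leq 1$ (they contribute no edges) yields an edge clique cover of $G(A)$ with at most $r$ cliques, so $\cc{G(A)} \leq r = \CPrk{A}$.

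There is no real obstacle here: both claims are one-line consequences of the min-plus arithmetic, and the reduction to the zero-diagonal case is provided by Lemma \ref{C(A)}. The only mild subtlety is to note that we need nonnegativity of the entries of the $b_i$ to conclude that the minimum in (ii) is attained with both summands equal to zero rather than, say, by cancellation — and this is guaranteed once the diagonal of $A$ has been normalized to zero.
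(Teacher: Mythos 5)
Your argument is correct and is essentially the reasoning the paper intends: the left inequality is definitional, the right equality is \cite[Proposition 3]{MR2927632}, and the middle inequality comes from reading off an edge clique cover from the zero-sets $Q_i$ of the rank-one factors --- the same clique-extraction device that drives the proof of Lemma \ref{cc}. The paper states the corollary without writing this out, so your proposal usefully supplies the one nontrivial step.

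One caveat: the step ``using Lemma \ref{C(A)} to replace $A$ with $C(A)$ if necessary'' is not a harmless reduction. Passing from $A$ to $C(A)$ preserves $\CPrk{A}$ but can change $G(A)$ and $\supp(A)$ (in Example \ref{ExCA} the entry $a_{12}=1$ becomes $0$ after normalization), so the statement you would prove for $C(A)$ is not the statement for $A$; indeed the paper's remark immediately after Corollary \ref{ineq} points out that the inequality can fail without the hypothesis $A=C(A)$. The fix is simply to invoke the standing assumption from the start of Section 3 that $A$ already has zero diagonal and nonnegative off-diagonal entries; under that reading your verification of (i) and (ii), including the nonnegativity of the entries of the $b_i$ obtained from $\min_l 2(b_l)_j = a_{jj} = 0$, goes through exactly as you wrote it.
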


Note that Example \ref{ExCA} shows that the inequality in Corollary \ref{ineq} is not necessarily true if we omit the condition $A=C(A)$.

%
%
%
%

%
%

\bigskip

\section{Graphs with CP-rank equal to the clique cover number}

In Lemma \ref{cc}, we proved that the lower bound for CP-rank of a graph is its clique cover number.
Therefore, we now proceed by studying the graphs that define matrices with the 
CP-ranks that are as close as possible to the bound from Corollary \ref{ineq}.


The following theorem shows that if we aspire to characterize graphs with the lowest possible 
CP-ranks, we can limit ourselves to graphs which are very well connected, i.e. their diameters are at most 2. However, the situation in the case $\diam{G} \leq 2$ appears to be quite complex. We provide examples of acyclic and cyclic graphs with diameter 2 where either $\CPrk{G}=\cc{G}$ 
or  $\CPrk{G}>\cc{G}$.

\begin{Theorem}
 If $G$ is a connected graph with  $\CPrk{G}=\cc{G}$, then  $\diam{G} \leq 2$.
\end{Theorem}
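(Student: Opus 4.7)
The plan is to prove the contrapositive: if $\diam{G} \geq 3$, then $\CPrk{G} > \cc{G}$. Fix vertices $u,v \in V(G)$ with $d(u,v) \geq 3$; then $\{u,v\} \notin E(G)$ and $u$ and $v$ share no common neighbour (any such neighbour would give a $u$--$v$ path of length $2$).

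The heart of the argument is to exhibit a single matrix $A \in \CP{n}$ with $G(A) = G$ and $\CPrk{A} > \cc{G}$. I would define $A$ by setting every diagonal entry to $0$, $a_{ij} = 0$ whenever $\{i,j\} \in E(G)$, $a_{uv} = a_{vu} = 1$, and $a_{ij} = 2$ for every remaining off-diagonal pair. Since $a_{ii} = 0$ for all $i$, Lemma \ref{CP} immediately yields $A \in \CP{n}$; moreover $G(A) = G$ by construction, and the minimum nonzero entry of $A$ is $a_{uv} = 1$.

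Now suppose for contradiction that $\CPrk{A} = \cc{G}$. Apply Lemma \ref{cc}: there exist a minimum edge clique cover $Q_1,\dots,Q_r$ (with $r = \cc{G}$) and vectors $b_1,\dots,b_r$ in bijective correspondence with the cliques, such that $(b_i)_j = 0$ precisely when $j \in Q_i$, and part (b) forces every other component to satisfy $(b_i)_j \geq a_{uv} = 1$. From $1 = a_{uv} = \min_i\bigl((b_i)_u + (b_i)_v\bigr)$, together with the fact that no $Q_i$ can contain both $u$ and $v$ (they are non-adjacent), I deduce $(b_i)_u + (b_i)_v \geq 1$ for every $i$, with equality attained at some index $j$. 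Interchanging $u$ and $v$ if necessary, $(b_j)_u = 0$ and $(b_j)_v = 1$, so $u \in Q_j$.

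To reach a contradiction, note that $Q_j$ covers at least one edge, so $|Q_j| \geq 2$ and I may pick $w \in Q_j$ with $w \neq u$. Then $\{u,w\} \in E(G)$, and the hypothesis $d(u,v) \geq 3$ forces $\{w,v\} \notin E(G)$, hence $a_{wv} = 2$ by construction. On the other hand, $(b_j)_w = 0$ and $(b_j)_v = 1$ give $a_{wv} \leq (b_j)_w + (b_j)_v = 1$, contradicting $a_{wv} = 2$. Therefore $\CPrk{A} > \cc{G}$, contradicting the hypothesis $\CPrk{G} = \cc{G}$, and we conclude $\diam{G} \leq 2$. The main technical point is the combined use of Lemma \ref{cc}(a) (the bijection between cliques and rank-one summands) and Lemma \ref{cc}(b) (the lower bound on nonzero components of each $b_i$), which together pin the value of $(b_j)_v$ to exactly $1$ and allow us to ``transport'' the minimal entry $a_{uv}$ along an edge of $Q_j$ to the position $(w,v)$, where the construction forbids it.
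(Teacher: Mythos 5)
Your proposal is correct and follows essentially the same route as the paper: the same matrix $A$ (zeros on edges and the diagonal, $1$ at the non-adjacent pair $u,v$ at distance $\geq 3$, and $2$ elsewhere), the same appeal to Lemma \ref{cc}(a) and (b) to pin $(b_j)_u=0$, $(b_j)_v=1$, and the same contradiction obtained by transporting the value $1$ along a second vertex of the clique $Q_j$ to a position where $A$ equals $2$. The only cosmetic differences are that you phrase it as a contrapositive and explicitly justify $|Q_j|\geq 2$, a point the paper leaves implicit.
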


\begin{proof}
 Suppose $\CPrk{G}=\cc{G}$ and $\diam{G} \geq 3$. Thus there exist vertices $u,v \in V(G)$ with $d(u,v)\geq 3$. 

 Define $A=(a_{ij})\in M_n(\TT)$ by 
  $$a_{ij}=\begin{cases}
     0, & \text{if } \{i,j\} \in E(G) \text{ or } i=j,\\
     1, & \text{if } \{i,j\}=\{u,v\},\\
     2, & \text{if } \{i,j\}\notin E(G) \text{ and } i\ne j.
   \end{cases}
  $$ 
 Observe that $G(A)=G$ and $A \in \CP{n}$. Let $A$ be of the form  \eqref{eq:cc}. Since $a_{u,v}=1$ is the minimal nonzero entry of $A$ and $(b_i \odot b_i^T)_{u,v} =(b_i)_u+(b_i)_v= 1$ for some $i$, then
 by Lemma \ref{cc} (b), $(b_i)_u=0$ and $(b_i)_v=1$ or $(b_i)_v=0$ and $(b_i)_u=1$. Suppose without loss of generality that $(b_i)_u=0$ and $(b_i)_v=1$. By  Lemma \ref{cc} (a), $(b_i)_l=0$ for some 
 $l \ne u$ and thus  $a_{v,l} \leq (b_i \odot b_i^T)_{v,l} =(b_i)_v+(b_i)_l= 1$. Hence by definition of $A$, 
 $\{v,l\} \in E(G)$, which contradicts $d(u,v) \geq 3$. 
\end{proof}

\begin{Example}
 If $G=P_3$ is a path on 3 vertices, then all matrices $A \in \CP{3}$ with $G(A)=P_3$ have (up to a permutational conjugation) the form
 $$A=\left[
 \begin{matrix}
  0 & a & 0\\
  a & 0 & 0\\
  0 & 0 & 0  
 \end{matrix}
 \right]=\left[
 \begin{matrix}
  a \\
  0\\
  0  
 \end{matrix}
 \right] \odot \left[
 \begin{matrix}
  a &
  0&
  0  
 \end{matrix}
 \right] \oplus \left[
 \begin{matrix}
  0 \\
  \infty\\
  0  
 \end{matrix}
 \right] \odot \left[
 \begin{matrix}
  0 &
  \infty&
  0  
 \end{matrix}
 \right]$$
 for some $0 \ne a \in \TT$. By Lemma \ref{rk1mtx}, $\CPrk{A} \ne1$, so it follows that $\CPrk{A}=2$ and thus  $\CPrk{P_3}=\cc{P_3}=2$.
\end{Example}

\begin{Example}
 If $G$ is a paw graph (see Example \ref{paw}), then $\cc{G}=2$. Since every matrix $B \in \CP{4}$, $G(B)=G$, has (up to permutational conjugation) the form
 $$B=\left[
 \begin{matrix}
  0 & a & b & 0\\
  a & 0 & 0& 0\\
  b & 0 & 0 & 0\\
  0 & 0 & 0 & 0  
 \end{matrix}
 \right]=\left[
 \begin{matrix}
  \infty \\
  0\\
  0  \\
  0
 \end{matrix}
 \right] \odot \left[
 \begin{matrix}
  \infty &
  0&
  0  & 0
 \end{matrix}
 \right] \oplus \left[
 \begin{matrix}
  0 \\
  a\\
  b\\
  0  
 \end{matrix}
 \right] \odot \left[
 \begin{matrix}
  0 &
  a&
  b & 0  
 \end{matrix}
 \right],$$
 for some $0 \ne a,b \in \TT$.  By Lemma \ref{rk1mtx}, we have $\CPrk{B} \ne1$ and so it follows that  $\CPrk{G}=\cc{G}=2$.
\end{Example}

\begin{Example}
 Let $E_5=5 K_1$ be an empty graph with 5 vertices and let
  	\begin{center}
			\begin{tikzpicture}[style=thick]
			   \foreach \x in {0,360/5,720/5,3*360/5,4*360/5} {
					\draw (0,0) -- (\x:1);
					\draw[fill=white] (\x:1) circle (1mm);
		};		
				\draw[fill=white] (0,0) circle (1mm);
	   			\draw (-1.5,0) node[anchor=east]{$S_6=E_5 \vee w=$};	
			\end{tikzpicture}
	\end{center}
 be a star graph with six vertices
 .  By Lemma \ref{lemma:vee} and Example \ref{CPrk6} it follows that 
  $$\CPrk{S_6}=\CPrk{E_5}=6>5=\cc{S_6}.$$ 
\end{Example}

\begin{Example}
 Let 
   	\begin{center}
			\begin{tikzpicture}[style=thick]
				\draw (0,0) -- (-1,-0.5) -- (-1,0.5)  -- (0,0) -- (1,0.5) -- (1,-0.5) -- (0,0);
				\draw[fill=white] (0,0) circle (1mm)  (-1,0.5) circle (1mm) (-1,-0.5) circle (1mm) (1,0.5) circle (1mm)
				    (1,-0.5) circle (1mm);
	   			\draw (-1.5,0) node[anchor=east]{$H=$};	
			\end{tikzpicture}
	\end{center}
and assume that $\CPrk{H}=\cc{H}=2$. Let 
$$D=\left[
 \begin{matrix}
  0 & 0 & 0 & 0 & 0\\
  0 & 0 & 0 & 1 & 2\\
  0 & 0 & 0 & 2 & 2\\
  0 & 1 & 2 & 0 & 0\\
  0 & 2 & 2 & 0 & 0\\
 \end{matrix}
 \right] \in \CP{5},$$ 
 and observe that $G(D)=H$. By  Lemma \ref{cca}(a),
 $$D=\left[
 \begin{matrix}
  0 & 0 & 0 & 0 & 0\\
  0 & 0 & 0 & 1 & 2\\
  0 & 0 & 0 & 2 & 2\\
  0 & 1 & 2 & 0 & 0\\
  0 & 2 & 2 & 0 & 0\\
 \end{matrix}
 \right]=\left[
 \begin{matrix}
  0 \\
  0\\
  0\\
  x\\
  y  
 \end{matrix}
 \right] \odot \left[
 \begin{matrix}
  0 &
  0&
  0 & 
  x&y
 \end{matrix}
 \right]\oplus \left[
 \begin{matrix}
  0 \\
  w\\
  t\\
  0  \\
  0
 \end{matrix}
 \right] \odot \left[
 \begin{matrix}
  0&w &
  t&
  0  & 0
 \end{matrix}
 \right].$$
Since $1=D_{2,4}=\min\{x,w\}$, it follows that $x=1$ or $w=1$. If $x=1$, then $2=D_{3,4}=\min\{1,t\}\leq 1$ and if $w=1$, then $2=D_{2,5}=\min\{y,1\}\leq 1$, both contradictions. Hence $\CPrk{H}>2=\cc{H}$.
\end{Example}

\bigskip

{\bf Acknowledgement.} The authors are grateful to the referees for their helpful remarks and suggestions that improved the presentation of this paper.

\bigskip

\bibliographystyle{plain}
\bibliography{CPrank}

\end{document}